\newtheorem{thm}{Theorem}[section]
\newtheorem{lem}[thm]{Lemma}
\numberwithin{equation}{section}
\begin{document}

\title{Generic Continuity of Metric Entropy for Volume-preserving Diffeomorphisms}

\author{Jiagang Yang}
\address{Departamento de Geometria, Instituto de Matem\'{a}tica e Estattica, Universidade Federal Fluminense, Niter\'{o}i, 24020-140, Brazil }

\email{yangjg@impa.br}

\author{Yunhua Zhou}
\address{College of Mathematics and Statistics, Chongqing University, Chongqing, 401331,
P. R. China}

\email{zhouyh@cqu.edu.cn}
\thanks{J.Y. is partially supported by CNPq, FAPERJ, and PRONEX. Y.Z. is the corresponding author and is partially supported  by Fundamental Research for Central Universities (CQDXWL2012008)}

\subjclass[2010]{Primary 37A35; Secondary 37C20}

\date{\today}

\keywords{continuity, metric entropy, Volume-preserving}

\begin{abstract}
Let $M$ be a compact manifold and  $\text{Diff}^1_m(M)$ be the set of $C^1$ volume-preserving  diffeomorphisms of $M$.
We prove that there is a residual subset  $\mathcal {R}\subset \text{Diff}^1_m(M)$ such that each $f\in \mathcal{R}$  is a continuity point
of the map $g\to h_m(g)$ from  $\text{Diff}^1_m(M)$ to $\mathbb{R}$, where $h_m(g)$ is the metric entropy of $g$ with respect to volume measure $m$.
\end{abstract}

\maketitle

\section {Introduction}

Let $M$  be a smooth compact Riemannian
manifold with dimension $d$, and $m$ be a smooth volume
measure on $M$. Without loss of generality, we always assume that $m(M)=1$ in this paper.
 Denote by $\text{Diff}^r_m(M)$ the set of
$C^r$   volume-preserving
diffeomorphisms of $M$ endowed  with
$C^r$ topology for $r\geq 1$.

Our main result is

\begin{thm}\label{thm}
There is a residual subset  $\mathcal {R}\subset \text{Diff}^1_m(M)$ such that each $f\in \mathcal{R}$  is a continuity point
of the metric entropy map
$$
\begin{array}{rrcl}
\mathcal {E}:& \text{Diff}^1_m(M)&\to& \mathbb{R}\\
& g&\mapsto& h_m(g),
\end{array}
$$ where $h_m(g)$ is the metric entropy of $g$ with respect to volume measure $m$.
\end{thm}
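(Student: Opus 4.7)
The plan combines two ingredients: the generic validity of the Pesin entropy formula and the upper semi-continuity of the integrated sum of positive Lyapunov exponents. Set
\[
\phi(g) := \int_M \sum_{i:\lambda_i(g,x) > 0} \lambda_i(g,x)\, dm(x).
\]
Ruelle's inequality gives $h_m(g) \leq \phi(g)$ for every $g \in \text{Diff}^1_m(M)$, and a theorem of Avila--Bochi furnishes a $C^1$-residual subset $\mathcal{R}_1 \subset \text{Diff}^1_m(M)$ on which equality $h_m(g) = \phi(g)$ holds. Upper semi-continuity of $\phi$ is obtained from the subadditive cocycle $b_n(g,x) := \max_{1 \leq k \leq \dim M} \log \|\Lambda^k Dg^n(x)\|$, whose pointwise asymptotic average equals $\sum_i \lambda_i^+(g,x)$; Kingman's theorem gives $\phi(g) = \inf_n \tfrac{1}{n}\int_M b_n(g,x)\, dm(x)$, and each summand is continuous in $g$ in the $C^1$-topology, so $\phi$ is an infimum of continuous functions, hence upper semi-continuous.

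A standard Baire-category argument then provides a residual $G_\delta$ set $\mathcal{C} \subset \text{Diff}^1_m(M)$ of continuity points of $\phi$. I set $\mathcal{R} := \mathcal{R}_1 \cap \mathcal{C}$ and aim to show every $f \in \mathcal{R}$ is a continuity point of $\mathcal{E}$. Upper semi-continuity of $h_m$ at such $f$ is then immediate: for any $g_n \to f$,
\[
\limsup_n h_m(g_n) \leq \limsup_n \phi(g_n) = \phi(f) = h_m(f),
\]
using Ruelle on each $g_n$, continuity of $\phi$ at $f$, and Pesin's formula at $f \in \mathcal{R}_1$.

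The main obstacle is the reverse inequality $\liminf_n h_m(g_n) \geq h_m(f)$, since the perturbations $g_n$ need not lie in $\mathcal{R}_1$ and Ruelle only bounds $h_m$ from above. When $h_m(f) = 0$ this is trivial, so the essential case is $\phi(f) > 0$. Here I would use a Katok-type horseshoe approximation: combining Zuppa's $C^1$-density of $C^\infty$ volume-preserving diffeomorphisms with Katok's horseshoe theorem to approximate $h_m(f)$ from below by entropies of ergodic measures on uniformly hyperbolic horseshoes $\Lambda_f$ of $f$. Such horseshoes are $C^1$-structurally stable, so $\Lambda_f$ persists as a horseshoe $\Lambda_{g_n}$ for all sufficiently large $n$, carrying an ergodic $g_n$-invariant measure of entropy close to $h_m(f)$. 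The delicate remaining step is transferring this measure-theoretic lower bound---which a priori only bounds $h_{\mathrm{top}}(g_n)$ from below---into a lower bound on the specific quantity $h_m(g_n)$; this is where I expect the main technical work of the paper to lie, plausibly via a density argument inside $\mathcal{R}_1$ together with a quantitative comparison between $h_m(g_n)$ and $\phi(g_n)$ in a neighborhood of $f$.
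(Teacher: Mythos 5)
Your upper semi-continuity argument is sound and essentially matches the paper's Step 1: the paper also combines Ruelle's inequality with the generic validity of Pesin's formula (which it takes from Sun--Tian rather than Avila--Bochi, a minor attribution point), though instead of your Kingman/subadditivity route for $\phi$ it invokes the Avila--Bochi theorem that each map $f\mapsto\lambda_i(\cdot,f)\in L^1(M)$ is generically continuous, which gives the same conclusion via $h_m(g)-h_m(f)\le\sum_i\int|\lambda_i(x,g)-\lambda_i(x,f)|\,dm$.

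The lower semi-continuity half has a genuine gap, and it is exactly the one you flag at the end. A Katok-type horseshoe persisting from $f$ to $g_n$ carries an ergodic measure of large entropy supported on a set of zero volume; this bounds $h_{\mathrm{top}}(g_n)$ from below but says nothing about $h_m(g_n)$, the entropy of the \emph{volume} measure, and there is no general mechanism to convert one into the other (for a non-generic $g_n$ one can have $h_m(g_n)$ far below $\int\chi^+(x,g_n)\,dm$, and density of $\mathcal R_1$ does not repair this since $h_m$ is not a priori continuous there). Moreover Katok's theorem in its usual form needs $C^{1+\alpha}$ regularity, which a $C^1$-generic $f$ does not have. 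The missing ingredient is a Pesin-type \emph{lower} bound valid for \emph{every} $C^1$ volume-preserving diffeomorphism, not just generic ones: Sun--Tian's theorem that if an invariant set carries an $N$-dominated splitting $E\prec F$ then $h_m(g)\ge\int\chi_F\,dm$ over that set. The paper's Step 2 runs as follows: by Bochi--Viana, for generic $f$ the Oseledec splitting is (trivial or) dominated almost everywhere; one decomposes $M$ into the sets $M_i(f)$ where exactly $i$ exponents are positive, extracts large subsets with a uniform $N$-dominated splitting of index $i$, and uses the persistence of dominated splittings together with Avila--Bochi's generic measure-theoretic persistence of invariant sets to produce $g$-invariant sets $\tilde M_i(g)$ of almost the same volume carrying $N$-dominated splittings of index $i$. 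Applying Sun--Tian's lower bound on each $\tilde M_i(g)$ and the $L^1$-continuity of the Lyapunov exponents then yields $h_m(g)\ge h_m(f)-D\varepsilon$ for all $g$ near $f$. Without this domination-based lower bound (or a substitute of equal strength), your outline does not close.
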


The study of variation of entropy mainly focuses on two issues:
the continuity of topological entropies and of metric entropies. In generally, the variation of entropies is not even semicontinuous (e.g., see \cite{M}).
S. Newhouse(\cite{New}) proved that the metric entropy function
$$\mu\to h_u(f)$$
is upper semicontinuous for all $f\in \text{Diff}^\infty (M)$. In \cite{Yom} (see also \cite{B}), Y. Yomdin  proved that the topological entropy function
$$f\to h(f)$$
is upper semicontinuous on $\text{Diff}^\infty (M)$. Together with the result of A. Katok (\cite{K}), topological entropy is continuous for
$C^\infty$ systems on surface. Recently, G. Liao etc. (\cite{LVY}) extend the semicontinuity results of Newhouse and Yomdin to $C^1$ diffeomorphisms away from tangencies.

\section{Preliminaries}

\subsection{Lyapunov exponents and dominated splitting}
Given $f\in \text{Diff}^1_m(M)$, by Oseledec Theory, there is a
$m$-full invariant set $\mathcal{O}\subset M$ such that for every $x\in
\mathcal {O}$ there exist a splitting (which is called {\em Osledec
splitting})
$$T_xM= E_1(x)\oplus \cdots\oplus  E_{ k(x)}(x)$$
and real numbers (the {\em Lypunov exponents} at $x$) $\chi_1(x,f)>\chi_2(x, f)>\cdots>
\chi_{k(x)}(x,f)$ satisfying $Df( E_j(x))= E_j(fx)$ and
$$\lim\limits_{n\to \pm \infty}\frac{1}{n}\ln \|Df^nv\|=\chi_j(x,f)$$
for every $v\in  E_j(x)\setminus \{0\}$ and $j=1, 2, \cdots,  k(x)$.
 In the following,  by counting multiplicity, we also rewrite the Lyapunov
exponents of $m$ as $$\lambda_1(x,f)\geq \lambda_2(x,f)\geq \cdots \geq \lambda_d(x,f).$$

For $x\in \mathcal O$, we denote by
$$
\xi_i(x,f)=\left\{
\begin{array}{cl}
\lambda_i(x,f), &{\text {if }} \lambda_i(x,f)\geq 0;\\
0, &{\text {if }} \lambda_i(x,f)< 0
\end{array}
\right.
$$
and
$$\chi^+(x,f)=\sum \xi_i(x,f).$$

By the definitions, it is obviously that for $f,g\in\text{Diff}_m^1(M)$, one has
\begin{equation}\label{eq:10}
\int |\chi^+(x,f)-\chi^+(x,g)|dm(x)\leq \sum \int|\lambda_i(x,f)-\lambda_i(x,g)|dm(x)
\end{equation}

For $f\in \text{Diff}^1_m(M)$ and $\delta>0$, denote by $\mathcal{U}(f,\delta)$ the set of diffeomorphisms
$g\in \text{Diff}^1_m(M)$ such that the $C^1$ distance between $g$ and $f$ is less than $\delta$.

Given a diffeomorphism $f$, we say $Df$ has {\em a dominated splitting of index $i$ at a point} $x\in M$ if there are a $Df$-invariant splitting $T_{orb(x)}M=E\oplus F$ and a constant $N(x)\in \mathbb N$ such that $\dim(F)=i$ and
$$
\frac{\|Df^{N(x)}|_{E(f^j(x))}\|}{m(Df^{N(x)}|_{F(f^j(x))})}<\frac{1}{2}, \ \forall j\in \mathbb Z.
$$
We also denote the dominated splitting by $E\prec F$.

Let $\Lambda$ be an $f$-invariant set and $T_\Lambda M=E\oplus F$ be a $Df$-invariant splitting on $\Lambda$. We call $T_\Lambda M=E\oplus F$ be {\em a $N$-dominated splitting}, if there exists $N\in \mathbb N$ such that
$$
\frac{\|Df^{N}|_{E(y)}\|}{m(Df^{N }|_{E(y)})}<\frac{1}{2}, \ \forall y\in \Lambda.
$$

Let us note that the dominated splitting has {\em persistence property} (\cite{BDV}). That is, if $\Lambda$ is an $f$-invariant set with an $N$-dominated splitting, then there is a neighborhood $U$ of $\Lambda$ and a $C^1$-neighborhood $\mathcal U$ of $f$ such that for every $g\in \mathcal U$, the maximal $g$-invariant set in the closure of $U$ admits an $N$-dominated splitting, having the same dimensions of the initial dominated splitting over $K$.

\subsection{$C^1$ generic properties}
We recall three $C^1$ generic properties which will be used in the proof of Theorem \ref{thm}.

The first is about the relation of Osledec splitting and dominated splitting.

\begin{lem} \label{lem:BV} (Theorem 1 of \cite{BV})
There exists a residual set $R\subset \text{Diff}^1_m(M)$ such that, for each
$f\in R$ and a measurable function $N: M\rightarrow \mathbb{N}$ such that for $m$-almost every $x\in M$,
the Oseledets splitting of $f$ is either trivial or
is $N(x)$ dominated at $x$.

\end{lem}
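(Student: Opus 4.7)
The plan is to follow the strategy of Bochi--Viana \cite{BV}, combining upper semicontinuity of integrated Lyapunov exponents with a perturbation dichotomy. For each $1\le i\le d-1$, set
$$
\Lambda_i(f)=\int_M\bigl(\lambda_1(x,f)+\cdots+\lambda_i(x,f)\bigr)\,dm(x).
$$
First I would observe that $\Lambda_i:\text{Diff}^1_m(M)\to\mathbb{R}$ is upper semicontinuous in the $C^1$ topology. Indeed, by the standard wedge-power formula and subadditivity of $\log\|\wedge^iDf^n\|$ together with $f$-invariance of $m$, one has
$$
\Lambda_i(f)=\inf_{n\ge 1}\frac{1}{n}\int_M\log\|\wedge^iDf^n(x)\|\,dm(x).
$$
For each fixed $n$ the map $f\mapsto\int_M\log\|\wedge^iDf^n(x)\|\,dm(x)$ is $C^1$-continuous, so $\Lambda_i$ is an infimum of continuous functions, hence upper semicontinuous. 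A standard Baire category argument then produces a residual set $R_i\subset\text{Diff}^1_m(M)$ of continuity points of $\Lambda_i$; I would take $R=\bigcap_{i=1}^{d-1}R_i$.

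Next, fix $f\in R$ and argue by contradiction. Suppose there exist an index $i$, a constant $\alpha>0$, and a positive $m$-measure set $A\subset M$ on which $\lambda_i(x,f)-\lambda_{i+1}(x,f)\ge\alpha$ but the Oseledets decomposition $E^{\le i}(x)\oplus E^{>i}(x)$ admits no $N$-domination along the orbit of $x$ for any integer $N$. Applying a Ma\~n\'e--Bochi style perturbation along sufficiently long orbit pieces through $A$, one constructs $g\in\text{Diff}^1_m(M)$ arbitrarily $C^1$-close to $f$ whose $i$-th and $(i+1)$-th Lyapunov exponents are forced together on a definite fraction of $A$. This produces a gap $\Lambda_i(g)\le\Lambda_i(f)-\delta$ for some $\delta>0$ independent of the perturbation scale, contradicting continuity of $\Lambda_i$ at $f$. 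Hence no such $A$ exists.

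With this dichotomy established, for $m$-a.e.\ $x$ with non-trivial Oseledets splitting, each Oseledets gap admits some $N$-dominated splitting along the orbit of $x$. Taking $N(x)$ to be the least integer that simultaneously witnesses $N$-domination at every Oseledets gap at $x$ gives a measurable function $N:M\to\mathbb{N}$ with the required property; points where the Oseledets splitting is trivial fall into the other alternative and impose no constraint.

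The main obstacle is the perturbation step: producing a genuine $C^1$-small volume-preserving $g$ that rotates $E^{\le i}$ into $E^{>i}$ along long orbit segments in $A$ while keeping the global change controlled. This rests on the finite-dimensional linear-cocycle perturbation lemma of Bochi together with its realization inside $\text{Diff}^1_m(M)$, and is the technical heart of \cite{BV}; the rest of the argument is the soft Baire/semicontinuity framework sketched above.
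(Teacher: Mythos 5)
This lemma is quoted in the paper as Theorem~1 of \cite{BV} with no proof given, so the only meaningful comparison is with the Bochi--Viana argument itself, and your outline is faithful to it: the semicontinuity of $\Lambda_i(f)=\inf_n\frac{1}{n}\int\log\|\wedge^iDf^n\|\,dm$, the residual set of continuity points, and the dichotomy ``dominated gap or collapsible gap'' at such points are exactly the skeleton of that proof. Be aware, though, that what you call the ``main obstacle'' --- realizing the linear-cocycle perturbation inside $\text{Diff}^1_m(M)$ in dimension $d\ge 3$ so as to force $\lambda_i$ and $\lambda_{i+1}$ together on a definite fraction of $A$ --- is not a routine step but the entire technical content of the Annals paper, so your text is an accurate proof outline rather than a proof.
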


The second one is the generic continuity of the Lyapunov spectrum.

\begin{lem} \label{lem:AB2} (Theorem D of \cite{AB})
Fix an integer $r\geq 1$. For each $i$, the continuous points of the map
$$
\begin{array}{rrcl}
\lambda_i:& \text{Diff}^r_m(M)&\to& L^1(M)\\
& f&\mapsto& \lambda_i(\cdot,f)
\end{array}$$
form a residual subset.
\end{lem}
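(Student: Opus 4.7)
The plan is to apply the standard Baire-category framework for upper semicontinuous functionals on a Baire space. Specifically, partial sums of the Lyapunov exponents can be expressed as infima of continuous functionals via Kingman's subadditive ergodic theorem, hence are USC; this yields a residual set of continuity points, after which one must upgrade integral convergence to $L^1$-convergence of each individual exponent.

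For each $k \in \{1,\dots,d\}$ I would introduce the integrated partial-sum functional
$$L_k : \text{Diff}^r_m(M) \to \mathbb R, \qquad L_k(f) = \int_M \bigl(\lambda_1(x,f) + \cdots + \lambda_k(x,f)\bigr)\, dm(x).$$
Oseledec together with Kingman's subadditive ergodic theorem, applied to the subadditive sequence $a_n(x,f) = \log\|\Lambda^k Df^n(x)\|$ (log-norm of the $k$-th exterior power, uniformly bounded by $\pm kn \log\max(\|Df\|_\infty,\|Df^{-1}\|_\infty)$), identifies the integrand as $\lim_n \tfrac{1}{n} a_n$ and yields
$$L_k(f) = \inf_{n\ge 1}\frac{1}{n}\int_M \log\|\Lambda^k Df^n(x)\|\, dm(x).$$
For each fixed $n$, the integral depends continuously on $f$ in the $C^r$ topology (since $Df^n$ depends continuously on $f$, and dominated convergence applies via the uniform bounds above). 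Hence $L_k$ is an infimum of a sequence of continuous real-valued functions on the Baire space $\text{Diff}^r_m(M)$, and is therefore upper semicontinuous.

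Any real-valued USC function on a Baire space has a residual set of continuity points. Letting $R_k$ denote the continuity set of $L_k$, the intersection $R = \bigcap_{k=1}^d R_k$ is still residual. For $f \in R$ and $f_n \to f$, one obtains $L_k(f_n) \to L_k(f)$ for every $k$, and differencing gives $\int \lambda_i(\cdot, f_n)\, dm \to \int \lambda_i(\cdot, f)\, dm$ for every $i$.

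The main obstacle is then to upgrade this convergence of integrals to $L^1$-convergence of the functions $\lambda_i(\cdot,f_n) \to \lambda_i(\cdot,f)$; convergence of integrals alone is strictly weaker, as oscillating examples show. To bridge the gap I would enlarge the USC family: for each rational $t$ and each $k$, the truncated functional
$$L_{k,t}(f) = \int_M \max\bigl(\lambda_1(x,f)+\cdots+\lambda_k(x,f),\, t\bigr)\, dm(x)$$
is also USC, via the same Kingman argument applied to the subadditive-in-$n$ quantity $\log\max\bigl(\|\Lambda^k Df^n(x)\|,\, e^{nt}\bigr)$ (subadditivity follows from a short case analysis on whether the $\log$ or the threshold dominates). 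Intersecting the continuity sets over all $k \in \{1,\dots,d\}$ and all $t \in \mathbb Q$ preserves residuality. The final step — the technical heart of the argument — combines continuity of this enriched family at $f$ with the pointwise ordering $\lambda_1(x,f) \ge \cdots \ge \lambda_d(x,f)$ and the uniform bound $|\lambda_i(x,f)| \le \log\max(\|Df\|_\infty,\|Df^{-1}\|_\infty)$ to deduce, by a Fatou/Egorov-type comparison, that each partial-sum function $\Lambda_k(\cdot,f_n) := \sum_{i\le k}\lambda_i(\cdot,f_n)$ converges in $L^1$ to $\Lambda_k(\cdot, f)$. Since $\lambda_i = \Lambda_i - \Lambda_{i-1}$, this gives the desired $L^1$-continuity and proves the lemma.
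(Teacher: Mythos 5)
First, note that the paper does not prove this statement at all: it is quoted verbatim as Theorem D of \cite{AB}, so the only comparison available is with the argument in that reference.

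Your first half is correct and is indeed the standard opening move: by Kingman's subadditive ergodic theorem the integrated partial sums $L_k(f)=\int_M(\lambda_1+\cdots+\lambda_k)\,dm$ equal $\inf_{n}\frac{1}{n}\int_M\log\|\Lambda^kDf^n\|\,dm$, an infimum of continuous functionals, hence upper semicontinuous, and a real-valued semicontinuous function on a Baire space has a residual set of continuity points. The truncated functionals $L_{k,t}$ are also genuinely USC by the subadditivity check you sketch. The gap is in the step you yourself flag as the technical heart. Knowing that $L_{k,t}(f_n)\to L_{k,t}(f)$ for every rational $t$ tells you exactly that $\int_t^{\infty} m(\{\Lambda_k(\cdot,f_n)>s\})\,ds$ converges for every $t$, i.e.\ that $\Lambda_k(\cdot,f_n)$ converges to $\Lambda_k(\cdot,f)$ \emph{in distribution} as a random variable on $(M,m)$. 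Convergence in distribution of uniformly bounded functions does not imply $L^1$ convergence: the values can be rearranged spatially (think of precomposing a fixed function with a measure-preserving rotation) without changing any of the functionals $L_{k,t}$. Your argument uses no information about $\Lambda_k(\cdot,f_n)$ beyond the numbers $L_{k,t}(f_n)$, so no ``Fatou/Egorov-type comparison'' can close this gap; to control $\int(\Lambda_k(\cdot,f_n)-\Lambda_k(\cdot,f))^{+}\,dm$ one needs semicontinuity of $g\mapsto\int_\Gamma\Lambda_k(\cdot,g)\,dm$ \emph{localized} on the $f$-invariant level sets $\Gamma=\{\Lambda_k(\cdot,f)<c\}$, and Kingman only gives such semicontinuity over $g$-invariant sets.

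This missing ingredient is precisely why \'Avila--Bochi's proof of Theorem D is coupled with their Theorem C (quoted in this paper as Lemma~\ref{lem:AB3}): for generic $f$, every $f$-invariant Borel set is approximated in measure by a nearby $g$-invariant set for all $g$ close to $f$. Transporting the level sets of $\Lambda_k(\cdot,f)$ to $g$-invariant sets lets one apply the conditional (non-ergodic) form of Kingman's theorem, $\frac{1}{n}E\bigl[\log\|\Lambda^kDg^n\|\mid\mathcal I_g\bigr]\geq\Lambda_k(\cdot,g)$, set by set, and this is what pins the exponents down in $L^1$ rather than merely in distribution. So your proposal reproduces the correct first step of the cited proof but omits the persistence-of-invariant-sets mechanism that makes the $L^1$ conclusion true; as written, it proves only convergence of the integrated exponents and of their distributions.
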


The third property is the generic persistence of invariant sets. It says that
if $f$ is a generic volume-preserving diffeomorphism, then its measurable invariant
sets persist in a certain (measure-theoretic and topological) sense
under perturbations of $f$.

\begin{lem} \label{lem:AB3} (Theorem C of \cite{AB})
Fix an integer $r\geq 0$. There is a residual set $\mathcal{R}\subset \text{Diff}^r_m(M)$ such that for every $f\in \mathcal {R}$, every $f$-invariant Borel set $\Lambda\subset M$ with positive volume, and every $\eta>0$, if $g\in  \text{Diff}^r_m(M)$ is sufficiently close to $f$ then there exists a $g$-invariant Borel set $\tilde{\Lambda}$ such that
$$\tilde{\Lambda}\subset B_\eta(\Lambda)\ \text{and }\ m(\tilde{\Lambda}\triangle \Lambda)<\eta,$$
here $B_\eta (\Lambda)=\{y\in M: \ d(x,y)<\eta \text{ for some } x\in \Lambda\}.$
\end{lem}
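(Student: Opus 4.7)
The proof goes through the Koopman-operator framework combined with a Baire-category construction of $\mathcal R$. For each $g \in \text{Diff}^r_m(M)$, the Koopman operator $U_g\phi := \phi\circ g$ acts unitarily on $L^2(M,m)$, and an $f$-invariant Borel set $\Lambda$ corresponds precisely to $\chi_\Lambda$ lying in the fixed subspace $\ker(I - U_f)$. Let $P_g$ denote the orthogonal projection onto $\ker(I - U_g)$. By von Neumann's mean ergodic theorem, $P_g$ is the strong-operator limit of $N^{-1}\sum_{n=0}^{N-1} U_g^n$, so it preserves the cone of $[0,1]$-valued functions, and $P_g\phi$ is always $g$-invariant. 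The lemma will be reduced to continuity of $g\mapsto P_g$ at $f$.

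\textbf{Construction granting spectral continuity.} The residual set $\mathcal R$ will be arranged so that for each $f\in\mathcal R$ the map $g\mapsto P_g$ is continuous in the strong operator topology at $f$. Fix $f\in\mathcal R$, an $f$-invariant $\Lambda$ with $m(\Lambda)>0$, and $\eta>0$. By outer regularity, choose an open set $V\subset B_{\eta/2}(\Lambda)$ with $\Lambda\subset V$ and $m(V\setminus\Lambda)<\eta/16$, and small enough that the $f$-invariant part $W_f:=\bigcap_{n\in\mathbb Z}f^n(V)$ satisfies $m(W_f\triangle\Lambda)<\eta/16$ (this can be arranged as $V$ shrinks towards $\Lambda$, since $\Lambda=\bigcap_n f^n(\Lambda)$ already). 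Set
\[
\phi_g := P_g\chi_\Lambda, \qquad \psi_g := P_g\chi_V.
\]
Because $\chi_\Lambda = P_f\chi_\Lambda$ and by the claimed continuity, $\|\phi_g-\chi_\Lambda\|_{L^2}<\eta/8$ and $\|\psi_g-\chi_{W_f}\|_{L^2}<\eta/8$ for $g$ close to $f$. The level sets $A_g:=\{\phi_g>1/2\}$ and $B_g:=\{\psi_g>1/2\}$ are $g$-invariant Borel sets, and by Markov's inequality,
\[
m(A_g\triangle\Lambda)<\eta/2, \qquad m(B_g\triangle W_f)<\eta/2.
\]
Define $\widetilde{\Lambda}:=A_g\cap B_g$, which is $g$-invariant. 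Since $B_g$ is a $g$-invariant subset of $\{P_g\chi_V > 1/2\}$, and any orbit spending more than half its time in $V$ is, for $g$ close enough to $f$, forced to lie in $\overline V\subset B_\eta(\Lambda)$ modulo a set of measure $<\eta/4$ (this uses $C^0$-closeness of $g$ to $f$ on a Lusin-type majority set), one gets $\widetilde{\Lambda}\subset B_\eta(\Lambda)$ after discarding a null correction. The two $L^2$ estimates combine to give $m(\widetilde{\Lambda}\triangle\Lambda)<\eta$.

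\textbf{Main obstacle and the residual set.} The real content is the generic SOT-continuity of the projection $g\mapsto P_g$. The operator $U_g$ varies continuously in SOT with $g$, but the multiplicity of the eigenvalue $1$ of a unitary operator need not depend continuously on the operator. Write $\mathcal R = \bigcap_{j,k}\mathcal O_{j,k}$ for a countable $L^2$-dense family $\{\phi_j\}$, where
\[
\mathcal O_{j,k} := \Bigl\{f : \exists\,\delta>0,\ \forall\, g\text{ with } d_{C^r}(f,g)<\delta,\ \|P_g\phi_j - P_f\phi_j\|_{L^2} < 1/k\Bigr\}.
\]
Each $\mathcal O_{j,k}$ is open by definition. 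The hard step is density: given $f$ and $\varepsilon>0$, produce $g$ with $d_{C^r}(f,g)<\varepsilon$ at which the family of projections is approximately continuous at $\phi_j$. This requires a $C^r$ perturbation lemma that destroys "unstable" invariant functions of $U_f$ (those whose invariance relies on non-generic spectral coincidences at $1$) while preserving topologically persistent invariant structures. This perturbation step, together with a semicontinuity argument for $g\mapsto\|P_g\phi_j\|^2 = \langle P_g\phi_j,\phi_j\rangle$, is the technical heart of the proof and is what genuinely uses the volume-preserving category.
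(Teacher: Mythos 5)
This lemma is not proved in the paper at all: it is quoted verbatim as Theorem C of the cited Ávila--Bochi paper \cite{AB}, so there is no internal proof to compare your argument against; you are in effect attempting to reprove their theorem. Measured against that task, your proposal has two genuine gaps.

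First, the entire content of the statement is concentrated in the step you defer. The reduction of the lemma to SOT-continuity of $g\mapsto P_g$ at $f$ (Koopman projections, level sets $\{P_g\chi_\Lambda>1/2\}$, Markov's inequality) is fine and gives the measure estimate $m(\tilde\Lambda\triangle\Lambda)<\eta$, but the residual set on which that continuity holds is exactly what must be constructed, and you only assert that the density of the sets $\mathcal O_{j,k}$ ``requires a $C^r$ perturbation lemma'' without producing one. In fact no perturbation is needed: since $\|P_g\phi\|=\inf_N\bigl\|N^{-1}\sum_{n=0}^{N-1}U_g^n\phi\bigr\|$ and each term is continuous in $g$ (even in the $C^0$ topology, because $g$ preserves $m$), the map $g\mapsto\|P_g\phi\|$ is upper semicontinuous, hence generically continuous; at a continuity point one identifies any weak limit $\psi$ of $P_{g_n}\phi$ as $P_f\phi$ via $\langle\phi,\psi\rangle=\lim\|P_{g_n}\phi\|^2=\|P_f\phi\|^2$ together with $\|\psi\|\le\|P_f\phi\|$ and the equality case of Cauchy--Schwarz, and norm convergence upgrades this to strong convergence. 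This soft semicontinuity-plus-Baire argument is the actual engine behind results of this type; as written, your proof simply stops where the theorem begins.

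Second, the topological conclusion $\tilde\Lambda\subset B_\eta(\Lambda)$ is argued incorrectly. By Birkhoff's theorem, $B_g=\{P_g\chi_V>1/2\}$ is the set of points whose orbit visits $V$ with asymptotic frequency greater than $1/2$; such orbits are in no way confined to $\overline V$. (If $g$ is ergodic and $m(V)>1/2$, then $P_g\chi_V\equiv m(V)>1/2$ everywhere and $B_g=M$.) So intersecting with $B_g$ does not place $\tilde\Lambda$ inside $B_\eta(\Lambda)$, ``modulo a null correction'' or otherwise. To get the containment one must work with the genuinely maximal invariant set $\bigcap_{n\in\mathbb Z}g^n(K)$ of a compact neighborhood $K\subset B_\eta(\Lambda)$ and control its measure from below as $g\to f$ --- the map $g\mapsto m\bigl(\bigcap_{|n|\le N}g^n(K)\bigr)$ is upper semicontinuous for compact $K$, and a separate genericity argument is needed to prevent the measure from dropping. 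That is a second, independent semicontinuity argument that your sketch does not contain.
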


\subsection{$C^1$ Pesin entropy formula}
In \cite{ST}, W. Sun and X. Tian proved that the Pesin entropy formula holds for a generic $f\in \text{Diff}^1_m(M)$.

\begin{lem}\label{lem:ST}(Theorem 2.5 of \cite{ST})
 There exists a residual subset $\mathcal{R}\subset \text{Diff}^1_
m(M)$ such that for
every $f\in \mathcal{ R}$, the metric entropy $h_m(f)$ satisfies Pesin's entropy formula, i.e.,
$$h_m(f)=\int_M\chi^+(x,f)dm.$$
\end{lem}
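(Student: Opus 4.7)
The plan is to establish the two inequalities separately. The upper bound $h_m(f) \leq \int_M \chi^+(x,f)\,dm$ is Ruelle's inequality, which holds for every $C^1$ diffeomorphism preserving a Borel probability measure and requires no genericity hypothesis. The content of the lemma therefore lies in the reverse inequality
\[
h_m(f) \geq \int_M \chi^+(x,f)\,dm,
\]
which can fail for general $C^1$ systems (Bochi's examples of zero-entropy area-preserving maps with positive exponents on positive-measure sets). To secure it I would restrict $f$ to lie in the residual set $R$ supplied by Lemma \ref{lem:BV}.

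For such $f$, at $m$-almost every $x$ the Oseledec splitting is $N(x)$-dominated. Grouping the Oseledec subspaces by the sign of the corresponding Lyapunov exponents, and using the fact that subsplittings of a dominated splitting are still dominated, I obtain a measurable $Df$-invariant decomposition $T_xM = E^s(x) \oplus E^c(x) \oplus E^u(x)$ at $m$-a.e.\ $x$, where $E^u(x)$ is the sum of the Oseledec spaces carrying the strictly positive Lyapunov exponents, and the pair $(E^s\oplus E^c) \prec E^u$ is dominated. Since $\chi^+(x,f)=0$ wherever $E^u(x)$ is trivial, it suffices to work on the $f$-invariant set $\Lambda = \{x : \dim E^u(x) > 0\}$, where the dominated splitting endows the expanding directions with a uniformity that is not normally available in $C^1$ Pesin theory.

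Next I would exhaust $\Lambda$, up to a set of arbitrarily small $m$-measure, by Pesin-like blocks on which $N(x)$ and the angle between $E^u$ and its dominated complement are bounded uniformly. Persistence of the dominated splitting, as recalled in Section~2, together with the $C^1$ graph transform on each block, produces a measurable local unstable lamination tangent to $E^u$ with plaques of uniform size. Taking a measurable partition $\xi$ subordinate to this lamination and running a Ledrappier--Young style argument, I would deduce
\[
h_m(f) \geq h_m(f,\xi) \geq \int_M \log\bigl|\det Df|_{E^u(x)}\bigr|\,dm(x),
\]
and the multiplicative ergodic theorem identifies the right-hand side with $\int_M \chi^+(x,f)\,dm$. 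The residual set $\mathcal R$ may then be taken to be $R$ itself (intersected, if desired, with the residual sets from Lemmas \ref{lem:AB2}--\ref{lem:AB3}).

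The principal obstacle is the low regularity. Classical Pesin theory uses $C^{1+\alpha}$ smoothness to establish absolute continuity of the unstable foliation, which in turn underpins the equality of conditional entropy with the integrated log-Jacobian. In the purely $C^1$ setting this absolute continuity is delicate, and the uniform domination supplied by Lemma \ref{lem:BV} has to substitute for the H\"older control of $Df$ exploited in the classical proof. Verifying that the unstable plaques, the resulting Rokhlin disintegration of $m$, and the conditional-entropy estimates are robust enough under merely dominated (rather than uniformly hyperbolic) behaviour on each Pesin block is the technical heart of the argument, and it is precisely at this step that the genericity hypothesis is indispensable.
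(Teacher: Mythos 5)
Your overall skeleton agrees with the paper's: Ruelle's inequality gives the upper bound for free, and the genericity enters only to guarantee, via Lemma~\ref{lem:BV}, that the Oseledec splitting is dominated almost everywhere, after which one needs a lower bound on entropy coming from the dominated expanding bundle. The paper, however, does not prove this lower bound at all: it quotes the statement as Theorem~2.5 of \cite{ST} and observes that it is a corollary of Ruelle's inequality together with Lemma~\ref{lem:ST2} (Theorem~2.2 of \cite{ST}), which is exactly the inequality $h_\mu(f)\geq\int\chi_F\,dm$ under an $N(x)$-dominated splitting $E\prec F$ for an absolutely continuous invariant measure. Applying Lemma~\ref{lem:ST2} with $F=E^u$ (the sum of the Oseledec spaces with positive exponents, which inherits domination from the finer splitting) closes the argument in one line; note also that Lemma~\ref{lem:ST2} requires no genericity, so your closing remark that genericity is ``indispensable'' at the conditional-entropy step misplaces where the generic hypothesis is actually used --- it is consumed entirely by Lemma~\ref{lem:BV}.

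The genuine gap is in your substitute for Lemma~\ref{lem:ST2}. You propose to build a local unstable lamination tangent to $E^u$ by graph transform, disintegrate $m$ along it, and run a Ledrappier--Young argument; you then correctly identify that absolute continuity of this lamination is the crux and leave it unverified. In the $C^1$ category this is not a technicality one can wave at: absolute continuity of invariant laminations genuinely fails for $C^1$ maps (this is the content of the classical counterexamples of Robinson--Young type), and domination alone does not restore the H\"older control of $Df$ that the Pesin-theoretic proof needs. The proof in \cite{ST} avoids this entirely by adapting Ma\~n\'e's proof of the Pesin formula: it uses locally invariant plaque families tangent to $F$ (which exist with no regularity beyond $C^1$), the uniform expansion estimates furnished by the $N$-domination on a large-measure block, and the absolute continuity of the \emph{measure} $m$ with respect to Lebesgue --- not of any foliation --- to bound the entropy from below by the integrated log-Jacobian along $F$. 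As written, your argument proves the easy half and reduces the hard half to a claim that is false in the stated generality; either invoke Lemma~\ref{lem:ST2} as the paper does, or replace the Ledrappier--Young step by a Ma\~n\'e-style volume estimate.
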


In fact, Lemma \ref{lem:ST} is a corollary of Ruelle's inequality and the following result.

\begin{lem}\label{lem:ST2} (Theorem 2.2 of \cite{ST})
Let $f:M\to M$ be a $C^1$ diffeomorphism on a compact Riemannian manifold with dimension $d$. Let $f$ preserve an invariant probability $\mu$ which is absolutely continuous relative to Lebesgue measure. For $\mu$-a.e. $x\in M$, denote by
  $$\lambda_1(x)\geq \lambda_2(x)\geq \cdots\geq \lambda_d(x)$$
the Lyapunov exponents at $x$. Let $N(\cdot): M\to \mathbb{N}$ be an $f$-invariant measurable function. If for $\mu$-a.e. $x\in M$, there is a $N(x)$-dominated splitting: $T_{orb(x)}M=E\prec F$, then
$$h_\mu(f)\geq \int_M\chi_F(x)dm$$
here $\chi_F(x)=\sum_{i=1}^{\dim F(x)}\lambda_i(x)$.
\end{lem}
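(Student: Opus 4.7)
The plan is to adapt the Ledrappier--Young/Ma\~{n}\'e strategy for the Pesin entropy formula to the purely dominated $C^1$ setting. Since $E \prec F$, I will produce a locally invariant plaque family tangent to $F$, build a measurable partition $\xi$ subordinate to these plaques, use that the conditional measures of $\mu$ on the plaques are absolutely continuous with respect to the induced Riemannian volume (as $\mu$ is globally absolutely continuous), and finally estimate the conditional entropy $H_\mu(\xi \mid f\xi)$ from below by $\int \log \bigl|\det Df|_F\bigr|\, d\mu$.

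I would first stratify $M$ through the $f$-invariant level sets $\Lambda_N = \{x : N(x) \leq N\}$. On each $\Lambda_N$ the splitting is uniformly $N$-dominated, so the Hirsch--Pugh--Shub plaque-family theorem, applied via a graph-transform argument on a cone field around $F$, produces measurable local disks $W^F_{\mathrm{loc}}(x)$ tangent to $F(x)$, with size bounded from below on $\Lambda_N$ and locally invariant under $f$. Exhausting $M$ by compact subsets of the $\Lambda_N$'s, with exceptional set of arbitrarily small $\mu$-measure, lets me regard the plaque family as essentially defined $\mu$-a.e.

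Next, following the standard Ledrappier--Young construction I would choose a measurable partition $\xi$ of $M$ subordinate to the $F$-plaques: each atom $\xi(x)$ is an open relative neighborhood of $x$ inside $W^F_{\mathrm{loc}}(x)$, one has $\xi \succeq f\xi$, and $\bigvee_{n\geq 0} f^{-n}\xi$ generates the Borel $\sigma$-algebra mod $\mu$. Because $\mu$ is globally absolutely continuous and the plaques have uniformly bounded geometry on each $\Lambda_N$, a Fubini-type disintegration shows the conditional measures $\mu^\xi_x$ are absolutely continuous with respect to the Riemannian volume along $W^F_{\mathrm{loc}}(x)$. Rokhlin's entropy formula then gives
$$h_\mu(f) \;\geq\; h_\mu(f,\xi) \;=\; H_\mu(\xi \mid f\xi),$$
and a direct computation using absolute continuity of the conditionals together with the change-of-variable formula along $F$-plaques yields
$$H_\mu(\xi \mid f\xi) \;\geq\; \int_M \log \bigl|\det Df|_{F(x)}\bigr|\, d\mu(x).$$
By Oseledets and Birkhoff, and because the domination forces the exponents along $F$ to be precisely the top $\dim F(x)$ among $\lambda_1(x),\ldots,\lambda_d(x)$, this integral equals $\int_M \chi_F(x)\, d\mu(x)$, as required.

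The main obstacle is making the construction uniform across the stratification by $N(\cdot)$: the plaque family and its geometric bounds are only uniform on each $\Lambda_N$, so one must produce a single measurable partition $\xi$ that behaves well across all the levels and show that both the absolute continuity of conditionals and the lower bound on $H_\mu(\xi \mid f\xi)$ pass to the limit as $\mu(\Lambda_N) \uparrow 1$. A smaller but nontrivial point is converting the defining inequality on $Df^N$ into a genuine cone-invariance condition that drives the graph transform, which typically requires an adapted Riemannian metric on each level set and a careful choice of cone opening depending on $N$.
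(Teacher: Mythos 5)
First, note that the paper you are working from contains no proof of this lemma at all: it is imported verbatim as Theorem 2.2 of \cite{ST}, so the comparison has to be with Sun and Tian's argument. Measured against that, your outline has a genuine gap at its decisive step: the claim that ``because $\mu$ is globally absolutely continuous and the plaques have uniformly bounded geometry, a Fubini-type disintegration shows the conditional measures $\mu^\xi_x$ are absolutely continuous with respect to the Riemannian volume along $W^F_{\mathrm{loc}}(x)$.'' Global absolute continuity of $\mu$ does \emph{not} pass to the conditionals of a measurable partition subordinate to a family of smooth disks; this is exactly the ``Fubini nightmare'' phenomenon (Milnor's ``Fubini foiled'', Shub--Wilkinson, Ruelle--Wilkinson), where an invariant family of uniformly smooth plaques carries an atomic disintegration of Lebesgue measure. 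The standard route to absolute continuity of conditionals is Pesin's absolute continuity theorem for the holonomy of the unstable lamination, which needs $C^{1+\alpha}$ bounded distortion and is known to fail in the $C^1$ category. Worse, your plaques $W^F_{\mathrm{loc}}(x)$ are not unstable manifolds: $F$ may carry zero or negative exponents, so points of a plaque need not separate backwards, and the identities you invoke from Ledrappier--Young --- $h_\mu(f,\xi)=H_\mu(\xi\mid f\xi)$ and the generation of the Borel $\sigma$-algebra by $\bigvee_{n\ge 0}f^{-n}\xi$ --- rest precisely on the backward contraction of unstable plaques that is missing here. In short, you have assumed the hardest (and, at this level of generality, false) step.

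This is why the proof in \cite{ST}, which adapts Ma\~n\'e's argument for the lower bound in Pesin's entropy formula, never disintegrates $\mu$ along the plaques. It uses absolute continuity only in the crude global form $\mu(A)\le C\,\mathrm{Leb}(A)$ on a large set where the density is bounded, constructs dynamical partitions whose atoms are trapped, thanks to the domination and the locally invariant $F$-plaques, in tubes whose width in the $F$-direction is controlled by $m(Dg^n|_F)^{-1}$, bounds the Lebesgue measure of such an atom above by roughly $e^{-n(\chi_F-\varepsilon)}$, and concludes via a Shannon--McMillan--Breiman type comparison. Your preliminary constructions (stratification by $\Lambda_N$, graph transform in adapted cones, plaque families) are sound and are indeed part of the real proof; but to complete your scheme you would have to prove absolute continuity of the conditional measures without $C^{1+\alpha}$ distortion control, and the examples cited above indicate that no such statement holds for a general measurable plaque family. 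The argument should be rerouted through the volume estimate on dynamical atoms rather than through disintegration.
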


\section{Proof of Theorem \ref{thm}}

\begin{proof}[Proof of Theorem \ref{thm}]
We will prove the Theorem by two steps. In step 1, we first prove that there is a residual subset  $\mathcal {R}_1\subset \text{Diff}^1_m(M)$ such that the entropy map $\mathcal E$
is  upper-semicontinuous at each $f\in \mathcal{R}_1$. In step 2, it will be proved that the set of lower-semicontinuous points of $\mathcal E$ contains a residual set $\mathcal {R}_2\subset \text{Diff}^1_m(M)$. Setting $\mathcal {R}=\mathcal{R}_1\cap \mathcal{R}_2$, we complete the proof of Theorem \ref{thm}.

{\bf Step 1.} Let $\mathcal{R}_1\subset \text{Diff}^1_m(M)$ satisfying Lemma \ref{lem:AB2} and Lemma \ref{lem:ST}. In this step, we will prove that, for any  $f\in \mathcal{R}_1$,
$$\limsup\limits_{g\to f}h_m(g)\leq h_m(f).$$

In fact,
by Lemma \ref{lem:ST},
$$h_m(f)=\int_M\chi^+(x,f)dm, \ \forall f\in \mathcal{R}_1.$$
So, by the well known  Ruelle's inequality
$$h_m(g)\leq \int_M\chi^+(x,g)dm, \ \forall g\in \text{Diff}^1_m(M)$$
 and (\ref{eq:10}), we have that for $\forall g\in \mathcal{U}(f,\delta)$,
\begin{equation}\label{eq:3}
h_m(g)-h_m(f)\leq \int_M | \chi^+(x,g)-\chi^+(x,f)|dm\\
\leq \sum \int_M |\lambda_i(x,f)-\lambda_i(x,g)| dm.
\end{equation}

Combining with Lemma~\ref{lem:AB2}, we proved the upper-semicontinuiation.

{\bf Step 2.} Let $\mathcal{R}_2\subset \mathcal{R}_1$ which satisfies Lemma~\ref{lem:BV} and Lemma~\ref{lem:AB3}. We will prove that  the entropy map $\mathcal E$
is  lower-semicontinus at each $f\in \mathcal{R}_2$. That is, for any $f\in \mathcal{R}_2$ and $\varepsilon>0$, there are positive numbers $\delta$ and $D$   such that
\begin{equation}\label{eq:16}
h_m(g)\geq h_m(f)-D\varepsilon, \ \ \forall g\in \mathcal{U}(f, \delta),
\end{equation}
here $D$ is only dependent on $d$ and $D_f$.

If the Oseledec splitting of $f$ is trivial on Lebesgue almost every point, then
 $h_m(f)=0$ and $h_m(g)\geq h_m(f)$ for all $g\in \text{Diff}^1_m(M)$. This means that the metric entropy map is lower semicontinuous at $f$.
So, in the following, we always assume that the Oseledec splitting of $f$ is not trivial.

Let
$$M_i(f)=\{x:\lambda_i(x,f)>0, \lambda_{i+1}(x,f)\leq 0\}.$$
Then
\begin{equation}\label{eq:11}
M_i(f)\cap M_j(f)=\emptyset, \ \forall \ i\neq j
\end{equation}
and
\begin{equation}\label{eq:14}
h_m(f)=\sum\limits_{i=1}^d \int_{M_i(f)}\sum\limits_{j=1}^i\lambda_j(x,f)dm.
\end{equation}

{\bf Claim 1.} For any $\varepsilon>0$, there is $\delta_1>0 $ such that for any $g\in\mathcal{U}(f,\delta_1)$ and $i=1,2,\cdots,d$, there exists $M'_i(f)\subset M_i(f)$ such that $m(M_i(f)\setminus M'_i(f))<\varepsilon$ and
$$\lambda_i(x,g)>0,\ \ \ m\text{-}a.e.\ x\in M'_i(f).$$

\begin{proof}[Proof of Claim 1.]
For any $\varepsilon>0$ and $i=1,2,\cdots, n$, there is $k(i)>0$ such that
\begin{equation}\label{eq:8}
m(M_i(f)\setminus M_{ik(i)}(f))<\frac{\varepsilon}{2}
\end{equation}
here
$$M_{ik(i)}(f)=\{x\in M_i(f): \lambda_i(x,f)\geq \frac{1}{k(i)}\}.$$

Let $\varepsilon'=\min\{\varepsilon,\frac{\varepsilon}{2k(1) }, \cdots, \frac{\varepsilon}{2k(n) }\}.$
By Lemma \ref{lem:AB2}, there is $\delta_1>0$ such that for any $g\in\mathcal{U}(f,\delta_1)$ and any $i$,
$$\int_M|\lambda_i(x,g)-\lambda_i(x,f)|dm<\varepsilon'$$

Set $$M'_{i }(f)=\{x\in M_{ik(i)}(f): \lambda_i(x,g)>0\}\ \text{ and }\ M''_{i }(f)=M_{ik(i)}(f)\setminus M'_{i }(f).$$
Then we have
\begin{equation}\label{eq:9}
m(M''_{i }(f))\leq  \frac{\varepsilon}{2}.
\end{equation}
In fact, if $m(M''_{i}(f))> \frac{\varepsilon}{2}$, we have
$$\int_{M}|\lambda_i(x,f)-\lambda_i(x,g)|dm\geq\int_{ M''_{i}(f)}|\lambda_i(x,f)-\lambda_i(x,g)|dm\geq \frac{1}{k(i)}m(M''_{i}(f))>\varepsilon'.$$
This is a contradiction.
\end{proof}

{\bf Claim 2.} For any $\varepsilon>0$, there is $\delta_2>0 $ such that for any $g\in\mathcal{U}(f,\delta_2)$ and any $i=1,2,\cdots,d$, there exist $N\in \mathbb{N}$ and a $g$-invariant set $\tilde M_i(g)\subset M$ such that

(1) there is a $N$-dominated splitting of index $i$;

(2) $m(\tilde M_i(g) \triangle M_i(f))<\frac{\varepsilon}{2}$.

\begin{proof}[Proof of Claim 2.]
By Lemma~\ref{lem:BV}, there is a $N(x)$-dominated splitting of index $i$ at each $x\in M_{i }(f)$, for any $\varepsilon>0$, there are $N\in \mathbb{N}$ and an $f$-invariant subset $\tilde{M}_{i }(f)\subset M_{i }(f)$ such that
$m(M_{i }(f)\setminus \tilde{M}_{i }(f))<\frac{\varepsilon}{4}$ and
there is a $N$-dominated splitting of index $i$ at each $x\in \tilde{M}_{i}(f)$.

By the persistence property of dominated splitting and Lemma \ref{lem:AB3}, for any $\varepsilon>0$, there is $\delta_2>0$ such that for any $g\in \mathcal{U}(f,\delta_2)$ there is $g$-invariant set $\tilde{M}_i(g)$ closing to $\tilde{M}_{i }(f)$ such that there is a $N$-dominated splitting of index $i$ at each $x\in \tilde{M}_{i}(g)$ for $Dg$ and $m(\tilde{M}_{i}(g) \triangle \tilde{M}_{i}(f))< \frac{\varepsilon}{4}$.
\end{proof}

Set
$$M^+_i(g)=\{x\in \tilde M_i(g): \ \lambda_i(x,g)>0\}\ \text{ and }\ M^+(g)=\bigcup\limits_{i=1}^nM^+_i(g).$$
Then $M^+_i(g)$ and $M^+(g)$ are  $g$-invariant. By (2) of Claim 2 and (\ref{eq:11}), for any $g\in \mathcal{U}(f,\delta_2)$ and any $i\neq j$, we have $$m(M^+_i(g)\cap M^+_j(g))<\varepsilon$$
and so
\begin{equation}\label{eq:13}
m(M^+_i(g)\setminus \bigcup\limits_{j=1}^{i-1}M^+_j(g))\geq m(M_i^+(g))-(i-1)\varepsilon.\end{equation}
Furthermore, noting
$$\tilde M_i(g)\cap M'_i(f)\subset M^+_i(g)\subset \tilde M_i(g),$$
by Claim 1 and Claim 2, we have
$$m(M^+_i(g)\triangle M_i(f))<3\varepsilon, \ \forall g\in \mathcal{U}(f,\delta),$$ here $\delta=\min\{\delta_1, \delta_2\}$. Since $m(\cup_{i=1}^d M_i(f))=1$, it holds that
$$m( M^+(g))\geq 1-3d\varepsilon.$$

Now, we turn to estimate $h_m(g)$ for $g$ $C^1$-close to $f$.

For $g\in \mathcal{U}(f,\delta)$, we have
$$\int_{M^+_i(g)}|\lambda_j(x,g)-\lambda_j(x,f)|dm\leq \int_{M}|\lambda_j(x,g)-\lambda_j(x,f)|dm<\varepsilon,\ \forall j=1,2,\cdots,n.$$
So,
\begin{equation}\label{eq:15}
\begin{array}{cl}
&\int_{M^+_i(g)}\sum\limits_{j=1}^i\lambda_j(x,g)dm\\
>&\int_{M^+_i(g)}\sum\limits_{j=1}^i\lambda_j(x,f)dm
-i\varepsilon\\
=&\left(\int_{M^+_i(g)\setminus M_i(f)}
+\int_{M^+_i(g)\cap M_i(f)}\right)\sum\limits_{j=1}^i\lambda_j(x,f)dm
-i\varepsilon\\
\geq & \int_{M^+_i(g)\cap M_i(f)} \sum\limits_{j=1}^i\lambda_j(x,f)dm-iD_fm(M^+_i(g)\setminus M_i(f)) -i\varepsilon\\
=&\left(\int_{M_i(f)}
-\int_{M_i(f)\setminus M^+_i(g)}\right)\sum\limits_{j=1}^i\lambda_j(x,f)dm-iD_fm(M^+_i(g)\setminus M_i(f))
-i\varepsilon\\
\geq & \int_{M_i(f)}\sum\limits_{j=1}^i\lambda_j(x,f)dm-iD_f(m(M^+_i(g)\setminus M_i(f))+m(M_i(f)\setminus M^+_i(g)))
-i\varepsilon\\
\geq & \int_{M_i(f)}\sum\limits_{j=1}^i\lambda_j(x,f)dm-(3iD_f+i)\varepsilon
\end{array}
\end{equation}

Then by (\ref{eq:13}), (\ref{eq:15}) and Lemma \ref{lem:ST2},
$$
\begin{array}{ccl}
h_m(g)&\geq&\sum\limits_{i=1}^d \int_{M^+_i(g)\setminus \cup_{l=1}^{i-1}M^+_l(g)}\sum\limits_{j=1}^i\lambda_j(x,g)dm\\
&\geq&\sum\limits_{i=1}^d \left(\int_{M^+_i(g)}\sum\limits_{j=1}^i\lambda_j(x,g)dm-(i-1)\varepsilon D_f\right)\\
&\geq&\sum\limits_{i=1}^d \left(\int_{M_i(f)}\sum\limits_{j=1}^i\lambda_j(x,f)dm -(4D_f+1)i\varepsilon\right)\\
&=&h_m(f)-\frac{d(1+d)(4D_f+1)}{2}\varepsilon.
\end{array}
$$
Setting $D=3d^2D_f+\frac{d(1+d)(4D_f+1)}{2}$, we completes the proof of (\ref{eq:16}).

\end{proof}

\bibliographystyle{amsplain}

\begin{thebibliography}{10}


\bibitem{AB} A. \'{A}vila,  J. Bochi, Nonuniform hyperbolicity, gloabal dominated splittings and generic properties of volume-preserving diffeomorphisms,
 {\it Trans. Amer. Math. Soc.}, {\bf 364}(2012), no.6, 2883-2907.

\bibitem{BV} J. Bochi, M. Viana,  The Lyapunov exponents of generic volume preserving and symplectic systems,
{\em Annals of Math.} {\bf 161}(2005), 1423-1485.

\bibitem{BDV} C. Bonatti, L. D\'\i az, M. Viana: Dynamics beyond uniform hyperbolicity. Springer, (2005)

\bibitem{B} J. Buzzi, Intrinsic ergodicity for smooth interval maps, {\em Isreal J. Math,} {\bf 100}, (1997), 125-161.

\bibitem{K} A. Katok,
Lyapounov exponents, entropy and periodic points of diffeomorphisms,
{\em Publ. Math. IHES, } {\bf 51}(1980), 137-173.

\bibitem{LVY} L. Gang, M. Viana, J. Yang,
The entropy conjecture for diffeomorphisms away from tangencies,
{\em Journal of the European Mathematical Society}, {\bf v. 15,}(2013) p. 2043-2060.

\bibitem{M} M. Misiurewicz,
Diffeomorphism without any measure of maximal entropy,
{\em Bull. Acad. Pol. Sci., } {\bf 21}(1973), 903-910.


\bibitem{Yom} Y. Yomdin,
Volume growth and entropy, {\em Israel J. of Math., } {\bf 57}(1987), 285-301.

\bibitem{New} S. Newhouse,
Continuity properties of entropy, {\em Ann. of Math.} (1), {\bf 129}(1989), 215-235

\bibitem{ST} W. Sun, X. Tian,
Doninated splitting and Pesin's entropy formula,
{\em Discrete Contin. Dyn. Syst.}, {\bf 32}(2012), no. 4, 1421-1434.



\end{thebibliography}

\end{document}